\newtheorem{theorem}{Theorem}
\newtheorem{lemma}[theorem]{Lemma}
\newtheorem*{definition}{Definition}
\newtheorem*{remark}{Remark}
\DeclareMathOperator{\RE}{Re}
\declaretheorem[numbered=no,
name=Theorem A]{theorem A}
\begin{document}
\title{Zalcman Conjecture for Starlike Mappings in Higher Dimensions}
\author{Surya Giri$^{*}$  }
%\address{Department of Applied Mathematics, Delhi Technological University, Delhi, SC 29208}
%\email{}
%\author{Surya Giri Mahant}
%\address{}
%\email{surya}
%\author[$\dagger$]{Surya Giri}
%\author[$\star$]{S. Sivaprasad Kumar}
%\author[$\ddag$]{J. Jones}

%%\affil[$\dagger$,$\star$]{Department of Applied Mathematics, Delhi Technological University, Delhi–110042, India}\\
%\vspace{0.5cm}
%\vspace{0.5cm}

  %\textit{E-mail address}, R.~Campbell: \texttt{campr@galois.psu.edu}
%\affil[$\star$]{Atmospheric Research Station,
%Pala Lundi, Fiji}
%\affil[$\ddag$]{Department of Philosophy, Freedman College,
%Periwinkle, Colorado 84320}
\date{}

%\title{Bohr-Rogosinski phenomenon for $\mathcal{S}^*(\psi)$ and $\mathcal{C}(\psi)$}
%	\thanks{K. Gangania thanks to University Grant Commission, New-Delhi, India for providing Junior Research Fellowship under }	

%	\author[Kamaljeet]{Kamaljeet Gangania}
%	\address{Department of Applied Mathematics, Delhi Technological University,
%%	\email{gangania.m1991@gmail.com}
	
%	\author[S. Sivaprasad Kumar]{S. Sivaprasad Kumar}
%	\address{Department of Applied Mathematics, Delhi Technological University,
%		Delhi--110042, India}
%	\email{spkumar@dce.ac.in}

\maketitle	

\begin{abstract}
    \noindent Counterexamples show that many results in the geometric function theory of one complex variable are not applicable for several complex variables. In this paper, we obtain sharp bounds for the Zalcman functional for $n=3$ associated with the starlike mappings defined on the unit ball in a complex Banach space and on the unit polydisk in $\mathbb{C}^n$. These results confirm the validity of the Zalcman conjecture in higher dimensions for $n=3$.
\end{abstract}
\vspace{0.5cm}
	\noindent \textit{Keywords:} Starlike mappings, Zalcman conjecture, Coefficient problems.\\
	\noindent \textit{AMS Subject Classification:} 32H02; 30C45.

\section{Introduction}\label{sec1}
    Let $\mathcal{S}$ denote the class of analytic and univalent functions $f$ in the unit disk $\mathbb{U}$ with the series expansion
\begin{equation}\label{IniF}
    f(z)=z +\sum_{n=2}^\infty a_n z^n, \quad z\in \mathbb{U}.
\end{equation}
  In 1960, L. Zalcman proposed the conjecture that for every $f\in \mathcal{S}$,
\begin{equation}\label{FSF}
     \vert a_n^2 -a_{2n-1} \vert \leq (n-1)^2, \quad n\geq 2.
\end{equation}
   Brown and Tsao~\cite{BroTsa} first observed that this conjecture implies the famous Bieberbach conjecture, which asserts that $\vert a_n \vert \leq n$ for all $f\in \mathcal{S}$.
   It is well known that the conjecture is true for $n=2$, where it reduces to the Fekete-Szeg\"{o} functional.  Krushkal~\cite{Kru1,Kru2} established its validity for $n=3,4,5$ and 6. However, the problem remains open for $n\geq 7$. The conjecture  has also been studied for various subclasses of $\mathcal{S}$. Brown and Tsao~\cite{BroTsa} settled it for the class of starlike and typically real functions, while Ma~\cite{Ma} proved that it also holds for close-to-convex functions. A function $f\in \mathcal{S}$ is said to be starlike if and only if
   $ \RE ((z f'(z))/f(z)) >0$ for all $z\in \mathbb{U}.  $
   The class of all such functions is represented by $\mathcal{S}^*$. The following bound follows directly from~\cite[Theorem 2]{BroTsa}:
\begin{theorem A}\label{thmA}\cite{BroTsa}
    If $f\in \mathcal{S}^*$ is of the form (\ref{IniF}), then the following sharp estimate holds:
    $$\vert a_3^2 -a_5 \vert \leq 4. $$
   % The inequality is sharp.
\end{theorem A}
   Failure of the Bieberbach conjecture and the Riemann mapping theorem  in several complex variables was discovered by  Cartan~\cite{Car} and Poincar\'{e}~\cite{Poi}, respectively. Many results in univalent function theory in one complex variable cannot be extended to higher dimensions, at least without restrictions. Recently, sharp estimates for the Toeplitz determinants were obtained for various subclasses of biholomorphic mappings in higher dimensions (see~\cite{Gir,GirKum,GirKum2,GirKum3} and the references therein), which also give an extension of the bounds from one complex variable to several complex variables.

   In this study, we focus on the Zalcman conjecture and examine its validity in higher dimensions. We obtain sharp estimate of the Zalcman functional for $n=3$ for subclass of starlike mappings defined on the unit ball in a complex Banach space and on the unit polydisk in $\mathbb{C}^n$. These results extend Theorem A to higher dimensions and provides a partial confirmation of the Zalcman conjecture for $n=3.$

   Let $X$ be a complex Banach space with norm $\|\cdot\|$ and $\mathbb{C}^n$ denote the space of $n$ complex variables, written as $z=(z_1,z_2,\cdots , z_n)'$. The unit ball in $X$ is given by $\mathbb{B}= \{ z\in X : \|z\|<1\}$ and $\mathbb{U}^n$ denotes the unit polydisk in $\mathbb{C}^n$.  The boundary and distinguished boundary of $\mathbb{U}^n$  are denoted by $\partial \mathbb{U}^n$ and $\partial_0 \mathbb{U}^n$, respectively. For any $z \in X\setminus\{0\}$, define
   $$ T_{z}=\{ l_z \in L(X,\mathbb{C}): l_z(z) = \|z\|, \|l_z \|=1\}, $$
   where $L(X,Y)$ denotes the space of continuous linear operator from $X$ to a complex Banach space $Y$. By Hahn-Banach theorem, the set $T_z$ is non-empty. Let $\mathcal{H}(\mathbb{B})$ denote the set of holomorphic mappings from $\mathbb{B}$ into $X$. It is well known that if $f \in \mathcal{H}(\mathbb{B})$, then
   $$ f(w) = \sum_{n=0}^\infty \frac{1}{n!} D^n f(z) ((w -z)^n ) $$
   for all $w$ in some neighbourhood of $z \in \mathbb{B}$, where $D^n f(z)$ is the $n$th-Fr\'{e}chet derivative of $f$ at $z$, and for $n \geq 1$,
   $$ D^n f(z)((w-z)^k)= D^n f(z)  \underbrace{( w-z, w-z, \cdots, w-z) }_\text{ k -times}.$$
   Here, $D^n f(z)$ is a bounded symmetric $n$-linear mapping from $ \prod_{j=1}^n X$ into $X$.  On a bounded circular domain $\Omega \subset \mathbb{C}^n$, the first and the $m^{th}$ Fr\'{e}chet derivative of a holomorphic mapping $f : \Omega \rightarrow X$  are written by
    $ D f(z)$ and $D^m f(z) (a^{m-1},\cdot)$, respectively. The matrix representations are
\begin{align*}
    D f(z) &= \bigg(\frac{\partial f_j}{\partial z_k} \bigg)_{1 \leq j, k \leq n}, \\
    D^m f(z)(a^{m-1}, \cdot) &= \bigg( \sum_{k_1,k_2, \cdots, k_{m-1}=1}^n  \frac{ \partial^m f_j (z)}{\partial z_k \partial z_{k_1} \cdots \partial z_{k_{m-1}}} a_{k_1} \cdots a_{k_{m-1}}   \bigg)_{1 \leq j,k \leq n},
\end{align*}
   where $f(z) = (f_1(z), f_2(z), \cdots f_n(z))', a= (a_1, a_2, \cdots a_n)'\in \mathbb{C}^n.$

    A mapping $f\in \mathcal{H}(\mathbb{B})$ is said to be normalized if $f(0)=0$ and $Df(0)=I$, where $I \in L(X,X)$ is the identity operator. It is said to be biholomorphic if it is invertible and its inverse $f^{-1}$ is holomorphic on $f(\mathbb{B})$. If for each $z \in \mathbb{B}$, $Df(z)$ has a bounded inverse, the mapping $f$ is said to be locally biholomorphic. By $\mathcal{S}(\mathbb{B})$, we denote the class of normalized biholomorphic mappings from the unit ball $\mathbb{B}$ into $X$.
    %  We recall the following definition of starlike mappings.
\begin{definition}\cite{Suf,Suf2}
    Let $f: \mathbb{B} \rightarrow X$ be a normalized locally biholomorphic mapping. Then $f$ is said to be starlike on $\mathbb{B}$ if and only if
    $$ \RE (l_z [Df(z)]^{-1}f(z))> 0, \quad x\in \mathbb{B}\setminus \{0\}, \quad l_z \in T_z. $$
    In the case $X=\mathbb{C}^n$ and $\mathbb{B}=\mathbb{U}^n$, this reduces to
    $$\RE \left(\frac{f_k(z)}{z_k} \right)>0 , \quad z \in \mathbb{U}^n\setminus\{0\},$$
    where $f(z)=(f_1(z),f_2(z),\cdots, f_n(z))'=[Df(z)]^{-1}f(z)$ and $k$ satisfies $\vert z_k \vert = \|z\|=\max_{1\leq j\leq n} \{\vert z_j\vert \}$.
    The class of all starlike mappings on $\mathbb{B}$ and $\mathbb{U}^n$ is denoted by $\mathcal{S}^*(\mathbb{B})$ and $\mathcal{S}^*(\mathbb{U}^n)$, respectively.
\end{definition}

   Let $\mathcal{P}$ be the class of analytic functions $p$ in $\mathbb{U}$ satisfying $p(0)=1$ and $\RE p(z)>0$ for all $z\in \mathbb{U}$.  Bounds on the coefficients of functions in $\mathcal{P}$ play an important role in coefficient problems.
   The following lemma is used to prove our main results.
\begin{lemma}~\cite{ChoRav}\label{lm3}
   Let $p(z)=1+\sum_{n=1}^\infty p_n z^n \in \mathcal{P}$. Then
   $$ \vert p_n \vert\leq 2\;\;\; \text{and}\;\;\;  \left\vert p_n - p_m p_{n-m}\right\vert \leq  2, \quad n\geq 2,\;\; 1\leq m \leq n-1. $$
\end{lemma}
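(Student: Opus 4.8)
The plan is to deduce both inequalities from the Herglotz (Riesz) representation of the Carath\'eodory class. Recall that a function $p$ with $p(0)=1$ lies in $\mathcal{P}$ if and only if there is a probability measure $\mu$ on the unit circle $\{\,|\eta|=1\,\}$ with
\[
p(z)=\int_{|\eta|=1}\frac{1+\bar\eta z}{1-\bar\eta z}\,d\mu(\eta).
\]
Expanding the kernel as a geometric series, $\frac{1+\bar\eta z}{1-\bar\eta z}=1+2\sum_{k\ge1}\bar\eta^{\,k}z^{k}$, and comparing coefficients yields $p_k=2\int_{|\eta|=1}\bar\eta^{\,k}\,d\mu(\eta)$ for every $k\ge1$. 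The first estimate is then immediate, since $\mu$ being a probability measure and $|\bar\eta^{\,k}|=1$ give $|p_k|\le 2\int d\mu=2$.

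For the second estimate I would set $c_k:=\int_{|\eta|=1}\bar\eta^{\,k}\,d\mu(\eta)$, so that $p_k=2c_k$ and
\[
p_n-p_m p_{n-m}=2\bigl(c_n-2c_m c_{n-m}\bigr);
\]
it therefore suffices to prove $|c_n-2c_m c_{n-m}|\le1$. Writing $A=\bar\eta^{\,m}$ and $B=\bar\eta^{\,n-m}$, so that $AB=\bar\eta^{\,n}$, and putting $a=\int A\,d\mu=c_m$ and $b=\int B\,d\mu=c_{n-m}$, the starting point is the covariance-type identity
\[
c_n-2c_m c_{n-m}=\int_{|\eta|=1}(A-a)(B-b)\,d\mu-ab,
\]
which follows at once by expanding the product under the integral.

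The hard part will be that the crude triangle inequality is far too weak here (it only yields a constant of order $3$ rather than $1$), so the argument must exploit the cancellation encoded in the identity above. I would bound the integral term by the Cauchy--Schwarz inequality and then use $|A|=|B|=1$ to evaluate the resulting variances as $\int|A-a|^2\,d\mu=1-|a|^2$ and $\int|B-b|^2\,d\mu=1-|b|^2$. This reduces everything to the elementary inequality
\[
\sqrt{(1-|a|^2)(1-|b|^2)}+|a||b|\le 1,
\]
which, after squaring, is equivalent to $2|a||b|\le|a|^2+|b|^2$. Chaining these steps gives $|c_n-2c_m c_{n-m}|\le1$, and hence $|p_n-p_m p_{n-m}|\le2$, as required.
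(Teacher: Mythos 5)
The paper does not prove this lemma; it is imported verbatim from the survey \cite{ChoRav}, so there is no internal proof to compare against. Your argument is correct and is essentially the classical one behind that citation (a Livingston-type covariance lemma for Carath\'eodory coefficients): the Herglotz representation gives $p_k=2c_k$ with $|c_k|\le 1$, the identity $c_n-2c_mc_{n-m}=\int (A-a)(B-b)\,d\mu-ab$ checks out, Cauchy--Schwarz with $\int|A-a|^2\,d\mu=1-|a|^2$ is applied correctly, and the closing inequality $\sqrt{(1-x^2)(1-y^2)}+xy\le 1$ reduces, as you say, to $2xy\le x^2+y^2$. Your remark that the crude triangle inequality only yields the constant $3$ is also accurate, and correctly identifies why the covariance cancellation is the essential step.
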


%\section{Main Results}
   This following results provide  sharp bound of the Zalcman functional for the class of starlike mappings in higher dimensions.
\begin{theorem}\label{thmB}
    Let $f\in \mathcal{H}(\mathbb{B},\mathbb{C})$ with $f(0)=1$ and $F(z)= z f(z) \in \mathcal{S}^*(\mathbb{B})$.
   Then
\begin{equation*}
\begin{aligned}
     \bigg\vert \bigg( \frac{ l_z (D^3 F(0) (z^3))}{3! \vert\vert z \vert\vert^3} \bigg)^2 &- \bigg(\frac{ l_z (D^5 F(0) (z^5))}{5! \vert\vert z \vert\vert^5}\bigg)   \bigg\vert  \leq 4.
\end{aligned}
\end{equation*}
    % where
  % $$ \eta_2 =\frac{ 3(\phi'(0))^2 + 2 \phi''(0) }{2 \phi'(0)} \;\; \text{and}\;\; \mu_2 = \frac{1}{ 2 \phi'(0)} \left( (\phi'(0))^3 + \frac{3 \phi'(0) \phi''(0)}{2} + \frac{ \phi'''(0)}{3} \right). $$
   The bound is sharp.
\end{theorem}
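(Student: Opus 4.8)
The plan is to reduce the problem to the classical one–variable estimate of Theorem~A by restricting $F$ to a complex line through the origin. Fix $z\in\mathbb{B}\setminus\{0\}$, write $u=z/\|z\|$ so that $\|u\|=1$ and $l_z(u)=l_z(z)/\|z\|=1$, and define the one–variable map
$$ h(\zeta)=l_z\big(F(\zeta u)\big),\qquad \zeta\in\mathbb{U}. $$
Since $F(\zeta u)=\zeta u\, f(\zeta u)$ and $l_z$ is linear with $l_z(u)=1$, this collapses to $h(\zeta)=\zeta\, g(\zeta)$ with $g(\zeta):=f(\zeta u)$ and $g(0)=f(0)=1$; in particular $h(0)=0$ and $h'(0)=1$, so $h$ is a normalized one–variable map.

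The key step is to verify that $h\in\mathcal{S}^*$ in the one–variable sense. First I would compute the Fr\'echet derivative $DF(w)(v)=f(w)\,v+\big(Df(w)(v)\big)w$, solve $DF(w)(x)=F(w)$ by the ansatz $x=\alpha w$ to obtain $[DF(w)]^{-1}F(w)=\tfrac{f(w)}{f(w)+Df(w)(w)}\,w$, and then evaluate the starlikeness functional at the point $w=\zeta u$. Using $l_w(w)=\|w\|>0$ and $Df(\zeta u)(\zeta u)=\zeta g'(\zeta)$, the defining inequality $\RE\big(l_w[DF(w)]^{-1}F(w)\big)>0$ becomes $\RE\big(g(\zeta)/(g(\zeta)+\zeta g'(\zeta))\big)>0$, equivalently (taking reciprocals, which preserves the sign of the real part) $\RE\big(1+\zeta g'(\zeta)/g(\zeta)\big)>0$. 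Since $\zeta h'(\zeta)/h(\zeta)=1+\zeta g'(\zeta)/g(\zeta)$, this is precisely $\RE\big(\zeta h'(\zeta)/h(\zeta)\big)>0$, so $h\in\mathcal{S}^*$. (The nonvanishing of $g$ and of $h'$ needed for the reciprocal step follows from local univalence of $F$.)

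Next I would match coefficients. Writing $h(\zeta)=\zeta+\sum_{k\ge2}a_k\zeta^k$ and comparing the Taylor expansions of $l_z(F(tz))$ — on one hand $\sum_k \tfrac{t^k}{k!}l_z(D^kF(0)(z^k))$, on the other $\|z\|\sum_m \tfrac{t^{m+1}}{m!}D^m f(0)(z^m)$ — yields the identity
$$ \frac{l_z\big(D^kF(0)(z^k)\big)}{k!\,\|z\|^{k}}=\frac{D^{k-1}f(0)(u^{k-1})}{(k-1)!}=a_k , $$
the last equality because the right-hand side is the coefficient of $\zeta^{k-1}$ in $g(\zeta)=f(\zeta u)$, i.e. the coefficient of $\zeta^k$ in $h$. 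Taking $k=3$ and $k=5$ shows that the left-hand side of the asserted inequality equals $|a_3^2-a_5|$ for the one–variable starlike function $h$, and Theorem~A gives $|a_3^2-a_5|\le 4$.

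For sharpness I would take $\phi\in L(X,\mathbb{C})$ with $\|\phi\|=1$ and set $F(z)=z/(1-\phi(z))^2$, so that $f(z)=(1-\phi(z))^{-2}$. A direct computation gives $1+Df(z)(z)/f(z)=(1+\phi(z))/(1-\phi(z))$, which has positive real part on $\mathbb{B}$ since $|\phi(z)|<1$; hence $F\in\mathcal{S}^*(\mathbb{B})$. Choosing $z$ along a direction where $\phi$ attains its norm, the associated $g(\zeta)=(1-\zeta)^{-2}$ is the Koebe-type function with $a_3=3$ and $a_5=5$, so the functional equals $|9-5|=4$. The main obstacle is the second step: carefully unwinding the Banach-space starlikeness condition for the product map $F=zf$ and confirming that the restriction $h$ lands exactly in $\mathcal{S}^*$; once that and the coefficient identity are in place, the bound is immediate from Theorem~A.
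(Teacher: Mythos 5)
Your proposal is correct, and it reaches the bound by a genuinely different final step than the paper. Both arguments share the same skeleton: restrict to the slice $\zeta\mapsto \zeta z_0$ with $z_0=z/\|z\|$, use the explicit formula $[DF(w)]^{-1}F(w)=\frac{f(w)}{f(w)+Df(w)(w)}\,w$ (your ansatz $x=\alpha w$ is exactly how the paper gets it), and identify $\frac{l_z(D^kF(0)(z^k))}{k!\,\|z\|^k}$ with $\frac{D^{k-1}f(0)(z_0^{k-1})}{(k-1)!}$. Where you diverge is the endgame: you observe that the restricted map $h(\zeta)=l_z(F(\zeta z_0))=\zeta f(\zeta z_0)$ is itself a classical one-variable starlike function (since $\RE(\zeta h'/h)=\RE\bigl(1+\zeta g'/g\bigr)>0$ is exactly the transported starlikeness condition), so the functional becomes $|a_3^2-a_5|$ for $h\in\mathcal{S}^*$ and Theorem A applies as a black box. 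The paper instead works with the Carath\'eodory function $p(\zeta)=1+\zeta g'(\zeta)/g(\zeta)\in\mathcal{P}$, expresses the functional in the coefficients of $p$, and re-derives the Brown--Tsao estimate from Lemma 1 ($|p_n|\le 2$ and $|p_n-p_mp_{n-m}|\le 2$); that self-contained computation is then reused verbatim for the polydisk case (Theorem 2), which is what the extra work buys. Your route is shorter and makes the ``reduction to one variable'' completely transparent, at the cost of importing Theorem A.

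Two small remarks on your sharpness discussion. First, your extremal map $F(z)=z/(1-\phi(z))^2$ is in fact the right one: the associated slice function is the Koebe function with $a_3=3$, $a_5=5$, giving $|9-5|=4$. (The paper displays $\tilde F(z)=z/(1-(l_u(z))^2)$, whose slice is $\zeta/(1-\zeta^2)$ with $a_3=a_5=1$ and functional value $0$; the coefficient values $3(l_u(z))^2z$ and $5(l_u(z))^4z$ that the paper then computes belong to $z/(1-l_u(z))^2$, so the displayed denominator is evidently a typo and your version is the intended one.) Second, a general $\phi\in L(X,\mathbb{C})$ with $\|\phi\|=1$ need not attain its norm in an infinite-dimensional Banach space; you should simply take $\phi=l_u$ for a fixed unit vector $u$ (guaranteed by Hahn--Banach), and then $z=ru$ gives equality. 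With that choice your argument is complete.
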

\begin{proof}
  Let $z\in X\setminus \{ 0 \}$ be fixed and set $z_0 = \frac{z}{\|z \|}$.  Define $ h : \mathbb{U} \rightarrow \mathbb{C}$ such that
\begin{equation*}
    h(\zeta) = \left\{ \begin{array}{ll}
     \dfrac{\zeta}{ l_z ([D F(\zeta z_0)]^{-1} F( \zeta z_0) )}, & \zeta \neq 0, \\ \\
    1, & \zeta =0.
    \end{array}
    \right.
\end{equation*}
   Then $h \in \mathcal{H}(\mathbb{U})$ with $h(0)=1  $. Moreover, we obtain
\begin{align*}
   h(\zeta) = \frac{\zeta}{l_z ([D F(\zeta z_0)]^{-1} F( \zeta z_0) ) } = &\frac{\zeta}{l_{z_0} ([D F(\zeta z_0)]^{-1} F( \zeta z_0) ) } \\
             =& \frac{\| \zeta z_0 \| }{l_{ \zeta z_0} ([D F(\zeta z_0)]^{-1} F( \zeta z_0) ) }.
\end{align*}
    Given that $F\in \mathcal{S}^*(\mathbb{B})$, it follows from  the definition that
\begin{equation}\label{Carat}
     \RE h(\zeta) >0 , \quad \zeta \in \mathbb{U}.
\end{equation}
%\begin{equation}
%   h \in \mathcal{P}.
%\end{equation}
  %   **Note that $G$ is biholomorphic on $\mathbb{B}$, we get $g(z)\neq 0$, $z\in \mathbb{B}$.
   Using the same technique as in \cite[Theorem 7.1.14]{Pfa}, we get
    $$ [D F(z)]^{-1} = \frac{1}{f(z)} \bigg( I - \frac{\frac{z D f(z)}{f(z)}}{1 + \frac{D f(z) z}{f(z)}} \bigg),  \quad z\in \mathbb{B}. $$
   Noting that $F(z)=zf(z)$, the above equation leads us to
    $$ [D F(z)]^{-1} F(z) = z \bigg( \frac{z f(z) }{f(z) + D f(z) z} \bigg),  $$
    which immediately implies
\begin{equation}\label{newe}
   \frac{\| z\|}{l_z ([D F(z)]^{-1} F(z))} = 1 + \frac{D f(z) z}{f(z)} .
\end{equation}
   In view of (\ref{newe}), we get
\begin{equation}\label{accr}
    h(\zeta) = \frac{\| \zeta z_0 \| }{l_{ \zeta z_0} ([Df(\zeta z_0)]^{-1} f( \zeta z_0) ) }  =  1 + \frac{D f(\zeta z_0)\zeta z_0}{f(\zeta z_0)}.
\end{equation}
   From the series expansions of $h(\zeta)$ and $f(\zeta z_0)$, it follows that
\begin{align*}
   \bigg(1 + & h'(0) \zeta  + \frac{h''(0)}{2} \zeta^2 + \cdots \bigg)\bigg( 1 + Df(0)(z_0) \zeta + \frac{ D^2 f(0)(z_{0}^2)}{2} \zeta^2 + \cdots \bigg)  \\
   & =\bigg( 1 + Df(0)(z_0) \zeta + \frac{ D^2 f(0)(z_{0}^2)}{2} \zeta^2 + \cdots \bigg)+ \bigg( Df(0)(z_0) \zeta +  D^2 f(0)(z_{0}^2)\zeta^2 + \cdots \bigg).
\end{align*}
   Upon comparing the homogeneous expansions, we obtain
\begin{equation*}\label{eqhf}
\begin{aligned}
%\left.
%\begin{array}{ll}
    h'(0) &= D f(0)(z_0),\;\; \frac{h''(0)}{2} =  D^2 f(0)(z_0^2) - (D f(0)(z_0))^2,\\
   % \frac{h'''(0)}{3!} &= ( Df(0)(z_0))^3 - \frac{3}{2} D f(0)(z_0) D^2 f(0) (z_0^2) + \frac{D^3 f(0) (z_0^3)}{2},\\
    \frac{h'''(0)}{3!} &= \frac{1}{2}\left(2 (Df(0)(z_0))^3 - 3 (D f(0)(z_0))( D^2 f(0) (z_0^2)) + D^3 f(0) (z_0^3)\right),\\
%\end{array}
%\right\}
\end{aligned}
\end{equation*}
   and
\begin{align*}
    \frac{h''''(0)}{4!}%&= \frac{1}{6} \left(-6 \text{A1}^4+12 \text{A1}^2 \text{A2}-4 \text{A1} \text{A3}-3 \text{A2}^2+\text{A4}\right) \\
                     &=  \frac{1}{6} \Big( D^4 f(0) (z_0^4)  -6 ( Df(0)(z_0))^4+12 ( Df(0)(z_0))^2 (D^2 f(0)(z_0^2)) -3 (D^2 f(0)(z_0^2))^2\\
                      & \;\;-4 ( Df(0)(z_0)) (D^3 f(0) (z_0^3)) \Big).
\end{align*}
   That is
\begin{equation}\label{eqhf2}
\begin{aligned}
\left.
\begin{array}{ll}
     h'(0) \|z\|&= D f(0)(z), \;\;    \dfrac{h''(0)}{2}\| z\|^2 =  D^2 f(0)(z^2) - (D f(0)(z))^2, \\  \\
    \dfrac{h'''(0)}{3!} \|z\|^3 &=  \dfrac{1}{2}\left (2 Df(0)(z))^3 - 3 (D f(0)(z)) (D^2 f(0) (z^2)) +  D^3 f(0) (z^3)\right) ,\\  \\
     \dfrac{h''''(0)}{4!} \|z\|^4&= \dfrac{1}{6} \bigg(D^4 f(0) (z^4)-6 ( Df(0)(z))^4+12 ( Df(0)(z))^2 (D^2 f(0)(z^2))  \\
                             & \;\;-3 (D^2 f(0)(z^2))^2 -4 ( Df(0)(z)) (D^3 f(0) (z^3))\bigg).
\end{array}
\right\}
\end{aligned}
\end{equation}
     Since $F(z) = z f(z)$, we have
\begin{align*}
 %    \frac{ D^2 F(0) (z^2)}{2! } &=  D f(0)(z)  z,  \\
     \frac{ D^3 F(0) (z^3)}{3! } &=  \frac{ D^2 f(0) (z^2)}{2! } z \;\;\; \text{and}\;\;\;   \frac{ D^5 F(0) (z^5)}{5! } =  \frac{ D^4 f(0) (z^4)}{4! } z,
    %  \frac{ D^4 F(0) (z^4)}{4! } &=  \frac{ D^3 f(0) (z^3)}{3! } z, \\
    %    \frac{ D^5 F(0) (z^5)}{5! } &=  \frac{ D^4 f(0) (z^4)}{4! } z,
\end{align*}
   which yields
\begin{align*}
    % \frac{l_z( D^2 G(0) (z^2))}{2! } &=  D g(0)(z)  \|z\|, \\
     \frac{l_z (D^3 F(0) (z^3))}{3! } =  \frac{ D^2 f(0) (z^2)}{2! } \|z\|   \;\;\; \text{and}\;\;\;
     %    \frac{l_z( D^4 G(0) (z^4))}{4! } &=  \frac{ D^3 g(0) (z^3)}{3! } \|z\|, \\
           \frac{l_z( D^5 F(0) (z^5))}{5! } =  \frac{ D^4 f(0) (z^4)}{4! }\|z\|.
\end{align*}
    Using the relations from (\ref{eqhf2}), we get
\begin{align*}
    % \frac{l_z( D^2 G(0) (z^2))}{2! \|z\|^2} &= s'(0), \\
     \frac{l_z (D^3 F(0) (z^3))}{3! \|z\|^3} &= \frac{1}{2}\bigg(\frac{h''(0)}{2}+ (h'(0))^2 \bigg),\\
       %  \frac{l_z( D^4 G(0) (z^4))}{4! \|z\|^4} &=  \frac{1}{3} \bigg( \frac{s'''(0)}{6}+\frac{(s'(0))^3}{2}+ \frac{3 s'(0) s''(0)}{4} \bigg), \\
        \frac{l_z( D^5 F(0) (z^5))}{5! \|z\|^5} %&=  \frac{1}{4!}\bigg(\text{s1}^4+6 \text{s1}^2 \text{s2}+8 \text{s1} \text{s3}+3 \text{s2}^2+6 \text{s4} \bigg) , \\
        &=  \frac{1}{4!}\bigg((h'(0))^4+ 6 (h'(0))^2\left(\frac{h''(0)}{2} \right)+8 h'(0) \left(\frac{h'''(0)}{3!}\right)+3 \left(\frac{h''(0)}{2}\right)^2\\
        &\quad+ 6 \left(\frac{h''''(0)}{4!}\right) \bigg).
\end{align*}
  These expressions together yield
\begin{align*}
   &  \bigg\vert  \bigg( \frac{ l_z (D^3 F(0) (z^3))}{3! \vert\vert z \vert\vert^3} \bigg)^2 - \bigg(\frac{ l_z (D^5 F(0) (z^5))}{5! \vert\vert z \vert\vert^5}\bigg) \bigg\vert \\
   %= \frac{1}{24} \bigg\vert  (5 s1^4 + 6 s1^2 s2 + 3 s2^2 - 8 s1 s3 - 6 s4) \bigg\vert\\
     &=\frac{1}{24} \bigg\vert 5 (h'(0))^4+ 6 (h'(0))^2 \Big(\frac{h''(0)}{2} \Big)- 8 h'(0) \Big(\frac{h'''(0)}{3!}\Big)+3 \Big(\frac{h''(0)}{2}\Big)^2- 6 \Big(\frac{h''''(0)}{4!}\Big) \bigg\vert \\
     &=\frac{1}{24} \bigg\vert 5 (h'(0))^2\Big( (h'(0))^2 - \frac{h''(0)}{2}\Big)+ 11 h'(0) \Big(\frac{h'(0) h''(0)}{2} -\frac{h'''(0)}{3!} \Big)+ 3 \Big( \Big(\frac{h''(0)}{2}\Big)^2 \\
     &\quad- \frac{h''''(0)}{4!}\Big) +3 \Big( \frac{h'(0) h'''(0)}{3!} - \frac{h''''(0)}{4!} \Big)\bigg\vert.
\end{align*}
    By (\ref{Carat}), we have $h\in \mathcal{P}$. Applying Lemma \ref{lm3} together with the triangle inequality in the above equality, we obtain the required result.

     To establish the sharpness part, let us consider the function $\tilde{F}$ defined by
\begin{equation*}
    \tilde{F}(z) = \frac{z}{1-(l_{u}(z))^2} , \quad z\in \mathbb{B}, \quad \vert\vert u \vert\vert=1.
\end{equation*}
   It is clear that $ \tilde{F}(z)\in \mathcal{S}^*(\mathbb{B})$. For this $\tilde{F}$, we have
  $$  \frac{D^3  \tilde{F}(0) (z^2)}{2!}=  3 (l_u(z))^2 z \;\;\; \text{and} \;\;\; \frac{D^5  \tilde{F}(0) (z^3)}{5!} = 5(l_u (z))^4 z,$$
     which in turn gives
  $$  \frac{l_z(D^3  G(0) (z^3))}{3!}= 3 (l_u(z))^2 \|z\| \;\;\; \text{and} \;\;\; \frac{l_z (D^5  G(0) ( z^5 ))}{5!} =  5 (l_u (z))^4 \| z \|, $$
   respectively. Taking $z = r u$ $(0< r <1)$, we obtain
\begin{equation*}\label{cftB}
     \frac{l_z(D^3  \tilde{F}(0) (z^3))}{3! \|z\|^3}=  3 \;\;\; \text{and} \;\;\;  \frac{l_z (D^5  \tilde{F}(0) ( z^5) ) }{5! \| z \|^5}  =  5.
\end{equation*}
   Consequently, for the mapping $\tilde{F}$, we have
\begin{align*}
   \bigg\vert \bigg( \frac{ l_z (D^3 \tilde{F}(0) (z^3))}{3! \vert\vert z \vert\vert^3} \bigg)^2 - \frac{ l_z (D^5 \tilde{F}(0) (z^5))}{5! \vert\vert z \vert\vert^5}  \bigg\vert =4,
\end{align*}
  which completes the sharpness part of the bound.
\end{proof}

\begin{theorem}\label{ThmUn1}
    Let $f \in \mathcal{H}(\mathbb{U}^n, \mathbb{C})$ with $f(0)=1$  and $F(z) =  z f(z) \in \mathcal{S}^*(\mathbb{U}^n)$. Then
\begin{equation}\label{mnresult}
\begin{aligned}
  \bigg\| \dfrac{1}{3 !} D^3 F(0) \bigg( z^2, \dfrac{D^3 F(0)(z^3)}{3!} \bigg) - \dfrac{D^5 F(0) (z^5)}{5! }  \bigg\| \leq 4 \|z\|^5 , \quad z \in \mathbb{U}^n .
\end{aligned}
\end{equation}
  % where
  %  $$ \eta_2 =\frac{ 3(\phi'(0))^2 + 2 \phi''(0) }{2 \phi'(0)} \;\; \text{and}\;\; \mu_2 = \frac{1}{ 2 \phi'(0)} \left( (\phi'(0))^3 + \frac{3 \phi'(0) \phi''(0)}{2} + \frac{ \phi'''(0)}{3} \right). $$
   The inequality is sharp.
\end{theorem}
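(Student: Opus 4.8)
The plan is to exploit the product structure $F(z)=zf(z)$ with $f$ scalar-valued to collapse the vector-valued expression into a scalar multiple of $z$, thereby reducing the estimate to the one-variable functional already controlled in Theorem~\ref{thmB}. First I would fix $z\in\mathbb{U}^n\setminus\{0\}$, put $z_0=z/\|z\|$, and introduce the same auxiliary function $h(\zeta)=1+\frac{Df(\zeta z_0)(\zeta z_0)}{f(\zeta z_0)}$ used in the proof of Theorem~\ref{thmB}. The Sherman--Morrison-type identity $[DF(z)]^{-1}F(z)=z\big(1+Df(z)(z)/f(z)\big)^{-1}$ holds verbatim on $\mathbb{C}^n$, and since the polydisk starlikeness condition $\RE(f_k(z)/z_k)>0$ (for the index $k$ with $|z_k|=\|z\|$) becomes $\RE\big(1+Df(z)(z)/f(z)\big)^{-1}>0$, we again obtain $\RE h(\zeta)>0$, i.e.\ $h\in\mathcal{P}$. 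Writing $p_m=h^{(m)}(0)/m!$, the relations (\ref{eqhf2}) give the two scalars $A:=\frac{D^2 f(0)(z^2)}{2!}=\frac{\|z\|^2}{2}(p_1^2+p_2)$ and $B:=\frac{D^4 f(0)(z^4)}{4!}=\frac{\|z\|^4}{4!}(p_1^4+6p_1^2p_2+8p_1p_3+3p_2^2+6p_4)$.

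The technical heart is a multilinear identity. Since $F_k(z)=z_kf(z)$, differentiating the product three times and evaluating at $0$ (where the term carrying the pure third derivative of $f$ is annihilated by its $z_k$ factor) yields, for every $w\in\mathbb{C}^n$,
\[ D^3 F(0)(z^2,w)=2\,D^2 f(0)(z,w)\,z+D^2 f(0)(z^2)\,w. \]
As already recorded in the proof of Theorem~\ref{thmB}, one also has $\frac{D^3 F(0)(z^3)}{3!}=A\,z$ and $\frac{D^5 F(0)(z^5)}{5!}=B\,z$. Substituting $w=\frac{D^3 F(0)(z^3)}{3!}=A\,z$ and using $D^2 f(0)(z^2)=2A$ together with $D^2 f(0)(z,A z)=2A^2$, the right-hand side simplifies to $4A^2 z+2A^2 z=6A^2 z$, so that $\frac{1}{3!}D^3 F(0)\big(z^2,\frac{D^3F(0)(z^3)}{3!}\big)=A^2 z$. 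Hence the vector inside the norm is exactly $(A^2-B)\,z$, and
\[ \Big\|\tfrac{1}{3!}D^3 F(0)\big(z^2,\tfrac{D^3F(0)(z^3)}{3!}\big)-\tfrac{D^5 F(0)(z^5)}{5!}\Big\|=|A^2-B|\,\|z\|. \]

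It then remains to bound $|A^2-B|$. For the scalar $f$ one has $\frac{l_z(D^3 F(0)(z^3))}{3!\|z\|^3}=A/\|z\|^2$ and $\frac{l_z(D^5 F(0)(z^5))}{5!\|z\|^5}=B/\|z\|^4$, so $A^2-B=\|z\|^4\big[(A/\|z\|^2)^2-B/\|z\|^4\big]$ is precisely $\|z\|^4$ times the functional estimated in Theorem~\ref{thmB}; therefore $|A^2-B|\le 4\|z\|^4$ and the desired bound $4\|z\|^5$ follows. (Equivalently, I would re-derive this by regrouping $A^2-B$ into the combinations $p_1^2(p_1^2-p_2)$, $p_1(p_1p_2-p_3)$, $(p_2^2-p_4)$ and $(p_1p_3-p_4)$, then apply $|p_1|\le2$ and Lemma~\ref{lm3} with the triangle inequality.) For sharpness I would take the Koebe-type mapping $\tilde F(z)=z/(1-l_u(z))^2$ with $\|u\|=1$---on $\mathbb{U}^n$ this is $\tilde F(z)=z/(1-z_k)^2$---which is starlike because $1+Df(z)(z)/f(z)=(1+z_k)/(1-z_k)$ maps into the right half-plane; evaluating at $z=ru$ gives $A=3\|z\|^2$, $B=5\|z\|^4$, whence $|A^2-B|\,\|z\|=|9-5|\,\|z\|^5=4\|z\|^5$. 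The main obstacle is the middle multilinear step: confirming that the third-order tensor of $F=zf$ forces the nested expression to be a scalar multiple of $z$; once this collapse is established, the problem is nothing more than the one-variable Zalcman estimate of Theorem~\ref{thmB}.
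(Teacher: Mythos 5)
Your proposal is correct and follows the same core reduction as the paper: the auxiliary Carath\'eodory function $h(\zeta)=1+Df(\zeta z_0)(\zeta z_0)/f(\zeta z_0)$, the identities $\frac{D^3F(0)(z^3)}{3!}=Az$ and $\frac{D^5F(0)(z^5)}{5!}=Bz$ coming from $F=zf$, the collapse of the nested term to $A^2z$, and the regrouping of $5p_1^4+6p_1^2p_2-8p_1p_3+3p_2^2-6p_4$ into the combinations controlled by Lemma~\ref{lm3}. Your general product-rule identity $D^3F(0)(z^2,w)=2D^2f(0)(z,w)\,z+D^2f(0)(z^2)\,w$ is correct but more than is needed: since $w=Az$ is a scalar multiple of $z$, the paper gets the same collapse by simply pulling $A$ out of the last slot of the symmetric trilinear form. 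There are two genuine differences worth recording. First, having identified the expression as the single vector $(A^2-B)z$, you read off its sup-norm as $|A^2-B|\,\|z\|$ directly; the paper instead bounds the $k$-th component for $z_0$ on the distinguished boundary and then invokes the maximum modulus theorem on the polydisk, so your route is cleaner and skips that step (and your optional appeal to Theorem~\ref{thmB} for $|A^2-B|\le 4\|z\|^4$ is legitimate, since $\mathbb{U}^n$ is the unit ball of $\mathbb{C}^n$ with the max norm). Second, your extremal mapping $z/(1-z_1)^2$ (Koebe type, giving $A=3\|z\|^2$, $B=5\|z\|^4$, hence $|A^2-B|=4\|z\|^4$) actually attains the bound, whereas the mapping $z/(1-z_1^2)$ written in the paper gives $A=z_1^2$, $B=z_1^4$ and therefore $A^2-B=0$; the numerical values $3$ and $5$ that the paper computes are the coefficients of your function, not of the one it states, so your sharpness example is the correct one and quietly repairs that part of the argument.
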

\begin{proof}
   For $z \in \mathbb{U}^n \setminus \{ 0 \}$, let $z_0 = \frac{z}{\| z\|}$ . Define  $h_k : \mathbb{U} \rightarrow \mathbb{C}$ given by
\begin{equation*}\label{hkzeta}
   h_k (\zeta) =
\left\{
\begin{array}{ll}
     \dfrac{\zeta z_k}{q_k (\zeta z_0) \| z_0 \|}, & \zeta \neq 0,\\
     1 , & \zeta =0,
\end{array}
\right.
\end{equation*}
   where $q(z) = [D F(z)]^{-1} F(z)$ and $k$ satisfies $\vert z_k \vert = \| z\| = \max_{1 \leq j \leq n} \{ \vert z_j \vert \}$.
   Since $\RE ([D F(z)]^{-1} F(z)) >0$, it follows that
\begin{equation}\label{Carat2}
   \RE h_k (\zeta) > 0.
\end{equation}
   Moreover, using (\ref{accr}), we obtain
   $$ h_k (\zeta) =  1 + \frac{D f( \zeta z_0) \zeta z_0}{f(\zeta z_0)}. $$
  % or equivalently,
  % $$  h_k (\zeta) f(\zeta z_0) =  f(\zeta z_0) + D f(\zeta z_0) \zeta z_0 . $$
   Comparison of the corresponding homogeneous terms in the Taylor expansions of $h_k$ and $f$ yields
\begin{equation}\label{use0}
\begin{aligned}
\left.
\begin{array}{lll}
    h'_k (0) =& D f(0)(z_0), \quad \dfrac{h''_k (0)}{2} = D^2 f(0) (z_0^2 ) - (D f(0) (z_0))^2, \\ \\
    \dfrac{h'''_k(0)}{3!} =& ( Df(0)(z_0))^3 - \dfrac{3}{2}D f(0)(z_0) D^2 f(0) (z_0^2)  + \dfrac{D^3 f(0) (z_0^3)}{2}, \\ \\
   \dfrac{h''''_k(0)}{4!}=&\dfrac{1}{6} \bigg( D^4 f(0) (z_0^4) -6 ( D f(0)(z_0))^4+12 ( Df(0)(z_0))^2 (D^2 f(0)(z_0^2)) \\ \\
                          & -3 (D^2 f(0)(z_0^2))^2   -4 ( D f(0)(z_0)) (D^3 f(0) (z_0^3))\bigg).
\end{array}
\right\}
\end{aligned}
\end{equation}
   Moreover, from the identity $F(z_0) =  z_0 f(z_0)$, we have
\begin{equation}\label{use}
    \frac{D^3 F_k(0) (z_0^3)}{3!} = \frac{D^2 f(0) (z_0^2)}{2!} \frac{z_k}{ \| z\|}, \quad  \frac{D^4 F_k(0) (z_0^4)}{4!} =\frac{ D^3 f(0) (z_0^3)}{3! }  \frac{z_k}{ \| z\|}
\end{equation}
   and
\begin{equation}\label{use2}
     \frac{D^5 F_k(0) (z_0^5)}{5!}   =\frac{ D^4 f(0) (z_0^4)}{4! }     \frac{z_k}{ \| z\|}.
\end{equation}
     In view of (\ref{use}) and (\ref{use2}), we obtain
%\begin{equation*}\label{Second2}
\begin{align*}
%\left.
%\begin{array}{ll}
    \bigg\vert  \frac{1}{3 !} D^3 F_k(0) \bigg( z_0^2, \frac{D^3 F(0)(z_0^3)}{3!}& \bigg) \frac{\| z\| }{z_k} - \frac{D^5 F_k(0) (z_0^5)}{5!} \frac{\| z\| }{z_k}\bigg\vert
    \\&=\Big\vert  \frac{1}{3 !}D^3 F_k(0) \bigg( z_0^2,   \frac{ D^2 f(0) (z_0^2)}{2! } z_0 \bigg) \frac{\| z\| }{z_k} -  \frac{ D^4 f(0) (z_0^4)}{4! }  \bigg\vert \\
       &= \Big\vert \frac{ D^2 f(0) (z_0^2)}{2! } \frac{1}{3 !} D^3 F_k(0) ( z_0^2,  z_0 ) \frac{\| z\| }{z_k} -  \frac{ D^4 f(0) (z_0^4)}{4! }  \bigg\vert  \\
       &= \Big\vert  \frac{ D^2 f(0) (z_0^2)}{2! } \frac{1}{3 !} D^3 F_k(0) ( z_0^3 ) \frac{\| z\| }{z_k} -  \frac{ D^4 f(0) (z_0^4)}{4! } \bigg\vert \\
       &= \bigg\vert \bigg( \frac{ D^2 f(0) (z_0^2)}{2! }  \bigg)^2 -  \frac{ D^4 f(0) (z_0^4)}{4! } \bigg\vert.
%\end{array}
%\right\}
\end{align*}
%\end{equation*}
  Applying (\ref{use0}) in the above equation, we get
\begin{align*}
    \bigg\vert& \frac{1}{3!}D^3 F_k(0) \bigg(z_0^2,  \frac{D^3 F(0)(z_0^3)}{3!} \bigg) \frac{\| z\| }{z_k} - \frac{D^5 F_k(0) (z_0^5)}{5!} \frac{\| z\| }{z_k}   \bigg\vert\\
      %&=  \bigg\vert \bigg( \frac{D^2 f(0)(z_0^2)}{2!} \bigg)^2   - \frac{ D^4 f(0) (z_0^4)}{4! }   \bigg\vert \\
    &=\frac{1}{24} \bigg\vert 5 (h_k'(0))^4+ 6 (h_k'(0))^2 \Big(\frac{h_k''(0)}{2} \Big)- 8 h_k'(0) \Big(\frac{h_k'''(0)}{3!}\Big)+3 \Big(\frac{h_k''(0)}{2}\Big)^2- 6 \Big(\frac{h_k''''(0)}{4!}\Big) \bigg\vert \\
     &=\frac{1}{24} \bigg\vert 5 (h_k'(0))^2\Big( (h_k'(0))^2 - \frac{h_k''(0)}{2}\Big)+ 11 h_k'(0) \Big(\frac{h_k'(0) h_k''(0)}{2} -\frac{h_k'''(0)}{3!} \Big)+ 3 \Big( \Big(\frac{h_k''(0)}{2}\Big)^2 \\
     &\quad- \frac{h_k''''(0)}{4!}\Big) +3 \Big( \frac{h_k'(0) h_k'''(0)}{3!} - \frac{h_k''''(0)}{4!} \Big)\bigg\vert.
\end{align*}
    By (\ref{Carat2}), we have $h_k\in \mathcal{P}$. Applying triangle inequality in the above equation and using Lemma~\ref{lm3}, we deduce that
$$ \bigg\vert \frac{1}{3!}D^3 F_k(0) \bigg(z_0^2,  \frac{D^3 F(0)(z_0^3)}{3!} \bigg) \frac{\| z\| }{z_k} - \frac{D^5 F_k(0) (z_0^5)}{5!} \frac{\| z\| }{z_k}   \bigg\vert
    \leq 4.$$
    If $z_0 \in \partial_0 \mathbb{U}^n$, then
    $$ \bigg\vert \frac{1}{3!}D^3 F_k(0) \bigg(z_0^2,  \frac{D^3 F(0)(z_0^3)}{3!} \bigg)  - \frac{D^5 F_k(0) (z_0^5)}{5!}   \bigg\vert
    \leq 4.$$
     Since
     $$ \frac{1}{3!}D^3 F_k(0) \bigg(z_0^2,  \frac{D^3 F(0)(z_0^3)}{3!} \bigg)  - \frac{D^5 F_k(0) (z_0^5)}{5!}, \quad k=1,2,3,\cdots, n, $$
    are holomorphic functions on $\overline{\mathbb{U}}^n$, by the maximum modulus theorem on the unit polydisk, we derive
   $$ \bigg\| \frac{1}{3!}D^3 F(0) \bigg(z^2,  \frac{D^3 F(0)(z^3)}{3!} \bigg)  - \frac{D^5 F(0) (z^5)}{5!}   \bigg\|
    \leq 4 \|z\|^5.$$

     To verify the sharpness of the bound, consider the function
\begin{equation}\label{extUnn}
    F(z)= \bigg( \frac{z_1}{1-z_1^2}, \frac{z_n}{1-z_1^2}, \cdots, \frac{z_n}{1-z_1^2},\bigg), \quad z\in \mathbb{U}^n.
\end{equation}
   It can be easily verified that $(D[F(z)]^{-1}F(z))\in \mathcal{S}^*(\mathbb{U}^n)$. Taking $z=(r,0,\cdots,n)$ $(0<r<1)$ in (\ref{extUnn}), the equality case holds in (\ref{mnresult}), which establishes the sharpness of the bound.
\end{proof}
%%===========================================================================================%%
%% If you are submitting to one of the Nature Portfolio journals, using the eJP submission   %%
%% system, please include the references within the manuscript file itself. You may do this  %%
%% by copying the reference list from your .bbl file, paste it into the main manuscript .tex %%
%% file, and delete the associated \verb+\bibliography+ commands.                            %%
%%===========================================================================================%%
%\bibliographystyle{sn-mathphys-num}
%\bibliography{references}% common bib file
%% if required, the content of .bbl file can be included here once bbl is generated
%%\input sn-article.bbl
\begin{remark}
   In the special case $X=\mathbb{C}$ and $\mathbb{B}=\mathbb{U}$, Theorem~\ref{thmB} and Theorem~\ref{ThmUn1} become equivalent to Theorem A.
\end{remark}
\section*{Declarations}
\subsection*{Conflict of interest}
	The authors declare that they have no conflict of interest.
\subsection*{Data Availability} Not Applicable.

\end{document}